\begin{document}

\newtheorem{theorem}{Theorem}    
\newtheorem{proposition}[theorem]{Proposition}
\newtheorem{conjecture}[theorem]{Conjecture}
\def\theconjecture{\unskip}
\newtheorem{corollary}[theorem]{Corollary}
\newtheorem{lemma}[theorem]{Lemma}
\newtheorem{sublemma}[theorem]{Sublemma}
\newtheorem{fact}[theorem]{Fact}
\newtheorem{observation}[theorem]{Observation}
\theoremstyle{definition}
\newtheorem{definition}{Definition}
\newtheorem{notation}[definition]{Notation}
\newtheorem{remark}[definition]{Remark}
\newtheorem{question}[definition]{Question}
\newtheorem{questions}[definition]{Questions}

\newtheorem{example}[definition]{Example}
\newtheorem{problem}[definition]{Problem}
\newtheorem{exercise}[definition]{Exercise}


\def\reals{{\mathbb R}}
\def\torus{{\mathbb T}}
\def\heis{{\mathbb H}}
\def\integers{{\mathbb Z}}
\def\rationals{{\mathbb Q}}
\def\naturals{{\mathbb N}}
\def\complex{{\mathbb C}\/}
\def\distance{\operatorname{distance}\,}
\def\sym{\operatorname{Symm}\,}
\def\support{\operatorname{support}\,}
\def\dist{\operatorname{dist}\,}
\def\Span{\operatorname{span}\,}
\def\degree{\operatorname{degree}\,}
\def\kernel{\operatorname{kernel}\,}
\def\dim{\operatorname{dim}\,}
\def\codim{\operatorname{codim}}
\def\trace{\operatorname{trace\,}}
\def\Span{\operatorname{span}\,}
\def\dimension{\operatorname{dimension}\,}
\def\codimension{\operatorname{codimension}\,}
\def\Gl{\operatorname{Gl}\,}
\def\nullspace{\scriptk}
\def\kernel{\operatorname{Ker}}
\def\ZZ{ {\mathbb Z} }
\def\p{\partial}
\def\rp{{ ^{-1} }}
\def\Re{\operatorname{Re\,} }
\def\Im{\operatorname{Im\,} }
\def\ov{\overline}
\def\eps{\varepsilon}
\def\lt{L^2}
\def\diver{\operatorname{div}}
\def\curl{\operatorname{curl}}
\def\etta{\eta}
\newcommand{\norm}[1]{ \|  #1 \|}
\def\expect{\mathbb E}
\def\bull{$\bullet$\ }
 \def\newbull{\newline$\bullet$\ }
 \def\nobull{\noindent$\bullet$\ }
\def\det{\operatorname{det}}
\def\Det{\operatorname{Det}}
\def\multiR{\mathbf R}
\def\bestA{\mathbf A}
\def\bestB{\mathbf B}
\def\Apq{\mathbf A_{p,q}}
\def\Apqr{\mathbf A_{p,q,r}}
\def\rank{\operatorname{rank}}
\def\rankk{\mathbf r}
\def\diameter{\operatorname{diameter}}
\def\bp{\mathbf p}
\def\bff{\mathbf f}
\def\bg{\mathbf g}
\def\essd{\operatorname{essential\ diameter}}

\def\mab{M}
\def\t2{\tfrac12}

\newcommand{\abr}[1]{ \langle  #1 \rangle}
\def\unitQ{{\mathbf Q}}
\def\mbfp{{\mathbf P}}

\def\aff{\operatorname{Aff}}
\def\wb{\mathbf C}

\newcommand{\Norm}[1]{ \Big\|  #1 \Big\| }
\newcommand{\set}[1]{ \left\{ #1 \right\} }
\def\one{{\mathbf 1}}
\newcommand{\modulo}[2]{[#1]_{#2}}

\def\rint{ \int_{\reals^+} }
\def\Abest{{\mathbb A}}

\def\barrier{\medskip\hrule\hrule\medskip}

\def\scriptf{{\mathcal F}}
\def\scripts{{\mathcal S}}
\def\scriptq{{\mathcal Q}}
\def\scriptg{{\mathcal G}}
\def\scriptm{{\mathcal M}}
\def\scriptb{{\mathcal B}}
\def\scriptc{{\mathcal C}}
\def\scriptt{{\mathcal T}}
\def\scripti{{\mathcal I}}
\def\scripte{{\mathcal E}}
\def\scriptv{{\mathcal V}}
\def\scriptw{{\mathcal W}}
\def\scriptu{{\mathcal U}}
\def\scripta{{\mathcal A}}
\def\scriptr{{\mathcal R}}
\def\scripto{{\mathcal O}}
\def\scripth{{\mathcal H}}
\def\scriptd{{\mathcal D}}
\def\scriptl{{\mathcal L}}
\def\scriptn{{\mathcal N}}
\def\scriptp{{\mathcal P}}
\def\scriptk{{\mathcal K}}
\def\scriptP{{\mathcal P}}
\def\scriptj{{\mathcal J}}
\def\scriptz{{\mathcal Z}}
\def\frakv{{\mathfrak V}}
\def\frakG{{\mathfrak G}}
\def\frakA{{\mathfrak A}}
\def\frakB{{\mathfrak B}}
\def\frakC{{\mathfrak C}}
\def\frakf{{\mathfrak F}}
\def\fcross{{\mathfrak F^{\times}}}

 \def\ccount{{\mathbf r}}

\author{Michael Christ}
\address{
        Michael Christ\\
        Department of Mathematics\\
        University of California \\
        Berkeley, CA 94720-3840, USA}
\email{mchrist@berkeley.edu}

\date{August 25, 2015.}
\thanks{Research supported by NSF grant DMS-1363324.}

\title
[On an apparently bilinear inequality  for the Fourier transform]
{On an apparently bilinear inequality \\ for the Fourier transform}

\begin{abstract}
A bilinear inequality of Geba et.\ al.\ for the Fourier transform 
is shown to be equivalent to a simpler linear inequality, and the range
of exponents is extended.
Related mixed-norm inequalities are discussed.
\end{abstract}
\maketitle

This note is a comment on a bilinear inequality of Geba et.\ al.\ \cite{greenleaf}.
Our thesis is that a simpler and purely linear inequality 
underlies the bilinear inequality.
This reasoning extends the bilinear inequality to the full range of
allowable exponents.
The linear inequality itself is not new; indeed, it is proved
in \cite{greenleaf} as a corollary of the bilinear one.

\section{Linear versus bilinear}

View $\reals^d$ as $\reals^{d'}\times\reals^{d''}$
with coordinates $x=(x',x'')$ and $\xi = (\xi',\xi'')$.
Consider the Fourier transform
\[\widehat{f}(\xi) = \int e^{-2\pi ix\cdot\xi} f(x)\,dx
= \int_{\reals^{d'}}
e^{-2\pi i x'\cdot\xi'} 
\int_{\reals^{d''}}
e^{-2\pi i x''\cdot\xi''} 
F(x',x'')\,dx''\,dx'.\]
Consider mixed-norm spaces $L^pL^s = L^p_{x'}L^s_{x''}$ with norms
$\norm{F}_{p,s}= \norm{F}_{L^p_{x'}L^s_{x''}}$ defined by first taking the $L^s$ norm with respect to
$x''$ for each fixed $x'$, then the $L^p$ norm of the resulting function of $x'$.
The spaces 
$L^s_{x''}L^p_{x'}$, in which the order in which the individual norms is taken is reversed,
also play a part in our discussion.
All inequalities in this note can be regarded as {\it a priori} inequalities for Schwartz functions.

Define \[\wb_r = r^{1/2r} t^{-1/2t}\] where $t=r'= r/(r-1)$ is the exponent conjugate to $r$.
Then $\wb_r<1$ for all $r\in(1,2)$.
Beckner \cite{beckner} has shown that $\wb_p^n$ is the optimal constant
in the Hausdorff-Young inequality. That is, $\norm{\widehat{f}}_{L^{p'}(\reals^n)}
\le \wb_p^n \norm{f}_{L^p(\reals^n)}$ for all $p\in[1,2]$
and all $n\ge 1$, and the inequality holds with no smaller 
constant factor on the right-hand side.

The linear inequality in question is as follows.
\begin{proposition} \label{prop1}
For all $p\in[1,2]$ and all $F$,
\begin{equation} \label{restriction}
\norm{\widehat{F}\big|_{\xi''=0}}_{L^{p'}(\reals^{d'})} \le
\wb_p^{d'} \norm{F}_{p,1}.\end{equation}
\end{proposition}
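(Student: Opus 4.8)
The plan is to recognize that restricting $\widehat{F}$ to the slice $\xi''=0$ amounts to Fourier transforming, in the variable $\xi'$ alone, the $x''$-marginal of $F$, and then to invoke Beckner's sharp Hausdorff--Young inequality in dimension $d'$. Concretely, I would set
$G(x') = \int_{\reals^{d''}} F(x',x'')\,dx''$, a function on $\reals^{d'}$. Setting $\xi''=0$ in the iterated-integral expression for $\widehat{F}$ displayed above kills the inner exponential $e^{-2\pi i x''\cdot\xi''}$, so the inner integral is exactly $G(x')$ and one reads off
$\widehat{F}(\xi',0) = \int_{\reals^{d'}} e^{-2\pi i x'\cdot\xi'} G(x')\,dx' = \widehat{G}(\xi')$, the $d'$-dimensional Fourier transform of $G$. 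This slice-equals-marginal identification is the only real idea; everything after it is routine.

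Next I would apply Beckner's theorem in dimension $d'$ to $G$, which gives $\norm{\widehat{G}}_{L^{p'}(\reals^{d'})} \le \wb_p^{d'}\,\norm{G}_{L^p(\reals^{d'})}$ for $p\in[1,2]$. It then remains only to bound $\norm{G}_{L^p(\reals^{d'})}$ by $\norm{F}_{p,1}$. For this I would estimate pointwise in $x'$ by the triangle inequality for integrals, $|G(x')| \le \int_{\reals^{d''}} |F(x',x'')|\,dx'' = \norm{F(x',\cdot)}_{L^1_{x''}}$, and then take the $L^p$ norm in $x'$ of both sides; by monotonicity of that norm this yields exactly $\norm{G}_{L^p(\reals^{d'})} \le \norm{F}_{p,1}$. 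Chaining the three relations produces the claimed inequality with the stated constant $\wb_p^{d'}$.

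I do not expect a genuine obstacle here; the proof is short once the marginal is introduced. The one point I would verify carefully is that no cancellation is lost when passing from $F$ to $G$, which is precisely why the $L^1$ norm in $x''$ (rather than some higher $L^s$) is the correct exponent on the right-hand side: the estimate $|\int F\,dx''| \le \int |F|\,dx''$ is tight exactly for $F$ that do not change sign in $x''$. Finally I would note, as a sanity check on the constant, that choosing $F(x',x'') = G(x')\,H(x'')$ with $H\ge 0$, $\int H = 1$, and $G$ a Beckner extremizer forces equality throughout the chain, so $\wb_p^{d'}$ cannot be improved.
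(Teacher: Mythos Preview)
Your proposal is correct and follows essentially the same route as the paper: introduce the marginal $G(x')=\int F(x',x'')\,dx''$, identify $\widehat{F}(\xi',0)=\widehat{G}(\xi')$, and apply the sharp Hausdorff--Young inequality in $\reals^{d'}$ together with $\norm{G}_{L^p}\le\norm{F}_{p,1}$. Your additional remarks on why the $L^1$ exponent in $x''$ is forced and on the sharpness of $\wb_p^{d'}$ via product extremizers are sound and mirror observations made later in the paper.
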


\begin{proof}
Define \[ f(x') = \int  F(x',x'')\,dx''.\]
If $F\in L^pL^1$ then $f\in L^p(\reals^{d'})$
and $\norm{f}_{L^p}\le \norm{F}_{p,1}$. Moreover
\begin{align*}
\widehat{F}(\xi',0) = \iint e^{-2\pi ix'\cdot\xi'} F(x',x'')\,dx''\,dx'
= \int e^{-2\pi ix'\cdot\xi'} f(x')\,dx'
= \widehat{f}(\xi').
\end{align*}
It now suffices to invoke the Hausdorff-Young inequality.
\end{proof}

The bilinear inequality of \cite{greenleaf} states that for all Schwartz functions,
\begin{equation} \label{bilinear} \norm{\widehat{FG}\big|_{\xi''=0}}_{L^r}
\le C\norm{F}_{p,s}\norm{G}_{q,t}\end{equation}
for all $(p,s;q,t;r)$
satisfying 
\begin{equation} \label{necessary}
s^{-1}+t^{-1}=1 \ \text{ and } \  r^{-1} = 1-p^{-1}-q^{-1} \end{equation}
with the supplementary restrictions $p,q,r\ge 2$ and $s=t=2$.
The $L^r$ norm is that of $\reals^{d'}$.
The relations \eqref{necessary} are necessary for the inequality to hold, 
as observed in \cite{greenleaf}.\footnote{
There are typographical inaccuracies in the formulas relating exponents in inequalities 
(1.10) and (1.11) of \cite{greenleaf}.} 
It is likewise necessary that $r\ge 2$.
 
The next result states that inequality \eqref{bilinear}, 
for the full range of possible exponents satisfying \eqref{necessary}
with no supplementary restrictions,
follows from the linear inequality \eqref{restriction}. 
Conversely, \eqref{restriction} follows from
\eqref{bilinear} for the more limited range of exponents treated in \cite{greenleaf},
as already noted in \cite{greenleaf}.

\begin{corollary}
If $p,q\in[1,\infty]$, $r\in[2,\infty]$, $s,t\in[1,\infty]$,
and $(p,s,q,t,r)$ satisfies \eqref{necessary} 
then the inequality \eqref{bilinear} holds
with $C = \wb_{r'}^{d'}$.
\end{corollary}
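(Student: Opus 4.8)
The plan is to apply Proposition~\ref{prop1} directly to the single function $H=FG$, so that no genuinely bilinear reasoning is required: the product structure will be handled afterward by two scalar applications of H\"older's inequality. Since $r\in[2,\infty]$, the conjugate exponent $r'$ lies in $[1,2]$ and satisfies $(r')'=r$, so $FG$ (which is Schwartz whenever $F$ and $G$ are) is an admissible input for \eqref{restriction} with $p$ replaced by $r'$. This yields
\[ \norm{\widehat{FG}\big|_{\xi''=0}}_{L^r(\reals^{d'})} \le \wb_{r'}^{d'}\,\norm{FG}_{r',1}, \]
which already produces the asserted constant $C=\wb_{r'}^{d'}$. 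It therefore remains only to establish the purely real-variable bound $\norm{FG}_{r',1}\le \norm{F}_{p,s}\norm{G}_{q,t}$.

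To this end I would first fix $x'$ and apply H\"older in the $x''$ variable. Because $s^{-1}+t^{-1}=1$,
\[ \int_{\reals^{d''}} |F(x',x'')\,G(x',x'')|\,dx'' \le \Big(\int |F(x',x'')|^s\,dx''\Big)^{1/s}\Big(\int |G(x',x'')|^t\,dx''\Big)^{1/t}. \]
The left-hand side is precisely the inner $L^1_{x''}$ norm of $FG$ evaluated at $x'$. Writing $\phi(x')$ and $\psi(x')$ for the two factors on the right, one has $\norm{\phi}_{L^p(\reals^{d'})}=\norm{F}_{p,s}$ and $\norm{\psi}_{L^q(\reals^{d'})}=\norm{G}_{q,t}$. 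Taking the $L^{r'}$ norm in $x'$ and invoking the generalized H\"older inequality $\norm{\phi\psi}_{L^{r'}}\le\norm{\phi}_{L^p}\norm{\psi}_{L^q}$, valid whenever $(r')^{-1}=p^{-1}+q^{-1}$ with $p,q,r'\in[1,\infty]$, gives $\norm{FG}_{r',1}\le\norm{F}_{p,s}\norm{G}_{q,t}$, completing the chain.

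The only point demanding care is the exponent bookkeeping. The second H\"older step requires $(r')^{-1}=p^{-1}+q^{-1}$, which is exactly the relation $r^{-1}=1-p^{-1}-q^{-1}$ of \eqref{necessary}; combined with $r\ge 2$ and $p,q\ge 1$ this forces $p^{-1}+q^{-1}\in[\tfrac12,1]$, so $r'\in[1,2]$ and both H\"older exponent triples are admissible. Neither H\"older step contributes a multiplicative constant, so the sharp factor $\wb_{r'}^{d'}$ furnished by the Hausdorff--Young input in Proposition~\ref{prop1} survives intact. I expect no technical obstacle here; the substantive point is the conceptual one, namely the recognition that the apparent bilinearity enters only through these two elementary H\"older inequalities, with all of the Fourier-analytic content already absorbed into the linear estimate \eqref{restriction}.
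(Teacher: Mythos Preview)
Your proof is correct and follows essentially the same route as the paper: apply H\"older's inequality to place $FG$ in $L^{r'}_{x'}L^1_{x''}$ (the paper does this in one stroke for mixed norms, you do it as an inner-then-outer pair, which is the same thing), and then invoke the linear estimate \eqref{restriction} with exponent $r'\in[1,2]$. The exponent bookkeeping and the resulting constant $\wb_{r'}^{d'}$ match exactly.
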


\begin{proof}
If $F\in L^pL^s$ and $G\in L^q L^t$ then by H\"older's inequality,
$FG\in L^uL^v$ where $u^{-1} = p^{-1}+q^{-1}$ and $v^{-1} = s^{-1}+t^{-1}$.
The assumption $r\ge 2$ is equivalent to $u\in[1,2]$, while $s^{-1}+t^{-1}=1$ means that $v=1$.
\end{proof}



\section{Mixed norm inequalities}

A mixed norm inequality $\norm{\widehat{F}}_{p',\infty} \le C\norm{F}_{p,1}$
would not only imply \eqref{restriction}, but would be dramatically stronger.
Therefore it is natural to ask whether such inequalities are valid.
We next discuss inequalities with mixed norms for $\widehat{F}$ on the left-hand side.

Given a function of $(y',y'')$, one can form mixed norms
$\norm{F}_{L^q_{y'}L^t_{y''}}$
in which an $L^t$ norm is taken of $y''\mapsto F(y',y'')$
for fixed $y'$, and then the $L^q$ norm of the resulting
function of $y'$ is taken. 
One can equally well reverse the order,
$\norm{F}_{L^6_{y''}L^q_{y'}}$
taking the $L^q$ norm first with respect to $y'$ and then
the $L^t$ norm with respect to $y''$.
These two quantities are related by
Minkowski's integral inequality: 
\begin{equation} \label{Mink}
\norm{F}_{L^q_{y'}L^t_{y''}}\le \norm{F}_{L^t_{y''}L^q_{y'}}
\ \text{ whenever } t\ge q.  \end{equation}

\begin{proposition} \label{prop:variant}
For all $p,s\in[1,2]$,
\begin{equation} \label{optimal}
\norm{\widehat{F}}_{L^{s'}_{\xi''}L^{p'}_{\xi'}}  
\le \wb_p^{d'}\wb_s^{d''}\norm{F}_{L^p_{x'} L^s_{x''}}.\end{equation}
\end{proposition}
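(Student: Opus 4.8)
The plan is to factor the full Fourier transform as $\widehat F=\mathcal F'\mathcal F''F$, where $\mathcal F''$ is the partial Fourier transform in the $x''$ variable and $\mathcal F'$ that in the $x'$ variable, and then to apply the sharp one-variable Hausdorff-Young inequality of Beckner to each factor fiberwise, so that only the scalar inequality is ever invoked. The feature that must be respected is that the two mixed norms in \eqref{optimal} are crossed: on the right $x'$ is the outer variable and $x''$ the inner one, while on the left $\xi''$ is outer and $\xi'$ inner. Reconciling these orders is exactly the task of Minkowski's inequality \eqref{Mink}, and the argument will succeed precisely because the relevant exponents make \eqref{Mink} available in the direction we need.

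Concretely, I would first introduce the intermediate function
\[ G(x',\xi'')=\int e^{-2\pi i x''\cdot\xi''}F(x',x'')\,dx'', \]
the partial Fourier transform of $F$ in $x''$, so that $\xi'\mapsto\widehat F(\xi',\xi'')$ is the ordinary Fourier transform of $x'\mapsto G(x',\xi'')$. For each fixed $x'$, Beckner's inequality on $\reals^{d''}$ gives $\norm{G(x',\cdot)}_{L^{s'}_{\xi''}}\le\wb_s^{d''}\norm{F(x',\cdot)}_{L^s_{x''}}$; taking the $L^p$ norm in $x'$ of this pointwise bound yields $\norm{G}_{L^p_{x'}L^{s'}_{\xi''}}\le\wb_s^{d''}\norm{F}_{L^p_{x'}L^s_{x''}}$. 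Since $s\in[1,2]$ forces $s'\ge2\ge p$, Minkowski's inequality \eqref{Mink} then lets me reverse the order of the two norms on $G$, giving $\norm{G}_{L^{s'}_{\xi''}L^p_{x'}}\le\norm{G}_{L^p_{x'}L^{s'}_{\xi''}}$. Finally, for each fixed $\xi''$, Beckner's inequality on $\reals^{d'}$ applied to $x'\mapsto G(x',\xi'')$ gives $\norm{\widehat F(\cdot,\xi'')}_{L^{p'}_{\xi'}}\le\wb_p^{d'}\norm{G(\cdot,\xi'')}_{L^p_{x'}}$, and taking the $L^{s'}$ norm in $\xi''$ produces $\norm{\widehat F}_{L^{s'}_{\xi''}L^{p'}_{\xi'}}\le\wb_p^{d'}\norm{G}_{L^{s'}_{\xi''}L^p_{x'}}$. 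Chaining these three estimates gives \eqref{optimal} with the constant $\wb_p^{d'}\wb_s^{d''}$.

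The only real obstacle is the bookkeeping of the order of operations, and it is rigid. One must transform $x''$ first and $x'$ second, and one must interchange the norms on the intermediate function $G$ rather than on $F$ or on $\widehat F$; carrying out the interchange on the wrong function, or transforming in the opposite order, forces \eqref{Mink} in a direction valid only under the extraneous hypothesis $s\ge p$, which need not hold. A seemingly more direct route, a genuinely vector-valued Hausdorff-Young estimate for $\mathcal F''$ acting on functions valued in $L^{p'}_{\xi'}$, is best avoided: it would require $L^{p'}$ to have Fourier type $s$ with constant one, which is false and would not produce the sharp constant in any case. Reducing to the scalar inequality fiberwise, with \eqref{Mink} used exactly once, circumvents this difficulty.
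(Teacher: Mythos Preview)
Your proof is correct and is essentially identical to the paper's own argument: factor the Fourier transform as the partial transform in $x''$ followed by that in $x'$, apply Beckner's Hausdorff--Young inequality fiberwise for each factor, and use Minkowski's inequality \eqref{Mink} (available because $p\le 2\le s'$) exactly once on the intermediate function to swap the order of the mixed norms. The paper states the same three steps more tersely as operator-norm bounds between mixed-norm spaces, but the content is the same.
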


Note that the order of the norms on the left-hand side is reversed;
the $L^{p'}_{\xi'}$ norm is taken first.

\begin{proof}
Regard the Fourier transform for $\reals^d$
as the composition of a partial Fourier transform in the second variable,
that is, in $\reals^{d''}$, followed by a partial Fourier transform in the first
variable. The first operation maps $L^p_{x'}L^s_{x''}$ to 
$L^p_{x'}L^{s'}_{\xi''}$
with operator norm $\wb_s^{d''}$. 
Since $p\le 2\le s'$, \eqref{Mink} says that the space
$L^p_{x'}L^{s'}_{\xi''}$
is contained in $L^{s'}_{\xi''}L^p_{x'}$ with a contractive inclusion map. 
The second operation maps this last mixed-norm space to
$L^{s'}_{\xi''}L^{p'}_{\xi'}$, with operator norm $\wb_p^{d'}$.
\end{proof}

It is likewise natural to seek inequalities with
$\norm{\widehat{F}}_{L^{p'}_{\xi'}L^{s'}_{\xi''}}$ on the left-hand side; 
now there is no reversal of the ordering.
Such an inequality with $s=1$ would be far stronger than \eqref{restriction}.
For certain pairs of exponents, such inequalities do hold,  as direct
consequences of Proposition~\ref{prop:variant}, 

\begin{corollary} \label{cor2}
If $p,s\in[1,2]$ and $p\le s$ then 
\begin{equation} \label{optimal2}
\norm{\widehat{F}}_{p',s'} \le \wb_p^{d'}\wb_s^{d''}\norm{F}_{p,s}. \end{equation}
\end{corollary}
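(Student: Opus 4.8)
The plan is to obtain \eqref{optimal2} directly from Proposition~\ref{prop:variant} by interchanging the order of the two norms on the left-hand side, with no further appeal to the Fourier transform. Proposition~\ref{prop:variant} already supplies the bound $\norm{\widehat{F}}_{L^{s'}_{\xi''}L^{p'}_{\xi'}} \le \wb_p^{d'}\wb_s^{d''}\norm{F}_{p,s}$, in which the inner norm is taken in $\xi'$; the target inequality \eqref{optimal2} instead requires the norm $\norm{\widehat{F}}_{L^{p'}_{\xi'}L^{s'}_{\xi''}}$, in which the inner norm is taken in $\xi''$. Thus it suffices to compare these two mixed norms of the single function $\widehat{F}$.

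The key step is the inequality $\norm{\widehat{F}}_{L^{p'}_{\xi'}L^{s'}_{\xi''}} \le \norm{\widehat{F}}_{L^{s'}_{\xi''}L^{p'}_{\xi'}}$, which I would deduce from Minkowski's integral inequality \eqref{Mink} applied to $\widehat{F}$ with $(y',y'')=(\xi',\xi'')$. The relevant exponents are $p'$ on $\xi'$ and $s'$ on $\xi''$, and the hypothesis $p\le s$ is exactly what is needed: since conjugation reverses order on $[1,\infty]$, from $p\le s$ we get $p'\ge s'$, so the larger exponent $p'$ occupies the outer ($\xi'$) slot of the target norm. Minkowski's inequality then allows passing to the reversed ordering at no cost, which is precisely the displayed comparison. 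Chaining this with Proposition~\ref{prop:variant} yields \eqref{optimal2} with the same constant $\wb_p^{d'}\wb_s^{d''}$.

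The only point requiring care, and indeed the sole piece of genuine content, is the direction of the Minkowski step: one must confirm that it is $p\le s$, and not the reverse, that places the larger exponent on the outside, so that \eqref{Mink} moves the norms in the favorable direction; had $p\ge s$ held, the interchange would go the wrong way and give nothing. Once this orientation is checked the argument is immediate, with no analytic obstacle, since both the Hausdorff--Young input and the norm interchange have already been isolated in Proposition~\ref{prop:variant} and in \eqref{Mink}.
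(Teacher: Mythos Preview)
Your proposal is correct and is essentially identical to the paper's own proof: observe that $p\le s$ gives $p'\ge s'$, apply Minkowski's inequality \eqref{Mink} to obtain $\norm{\widehat{F}}_{L^{p'}_{\xi'}L^{s'}_{\xi''}} \le \norm{\widehat{F}}_{L^{s'}_{\xi''}L^{p'}_{\xi'}}$, and then invoke Proposition~\ref{prop:variant}. You have also correctly identified the only substantive point, namely checking that the hypothesis $p\le s$ orients the Minkowski step in the favorable direction.
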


\begin{proof}
$p'\ge s'$,  so by \eqref{Mink},
$\norm{\widehat{F}}_{L^{p'}_{\xi'}L^{s'}_{\xi''}}  
\le \norm{\widehat{F}}_{L^{s'}_{\xi''}L^{p'}_{\xi'}}$.
Invoke Proposition~\ref{prop:variant}.
\end{proof}

However, the extension of Corollary~\ref{cor2} 
to $s<p$ breaks down, no matter how large a constant
factor is allowed on the right-hand side.

\begin{proposition}
Let $d',d''$ both be strictly positive integers, and let $s<p$. Then
the Fourier transform fails to map $L^p_{x'}L^s_{x''}(\reals^{d'}\times\reals^{d''})$
boundedly to $L^{p'}_{\xi'} L^{s'}_{\xi''}(\reals^{d'}\times\reals^{d''})$. 
\end{proposition}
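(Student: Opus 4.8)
The plan is to disprove the putative inequality $\norm{\widehat F}_{L^{p'}_{\xi'}L^{s'}_{\xi''}}\le C\norm{F}_{p,s}$ by exhibiting a one‑parameter family of Schwartz functions $F_N$ for which the ratio of the two sides tends to $\infty$ as $N\to\infty$. Fix smooth bumps $f$ on $\reals^{d'}$ and $g$ on $\reals^{d''}$, each compactly supported and with $\int g\neq 0$, and fix vectors $a\in\reals^{d'}$, $b\in\reals^{d''}$ whose lengths will be taken large. I would set
\[ F_N(x',x'') = \sum_{j=1}^{N} f(x'-ja)\,g(x''-jb), \]
a finite sum of translates of the single tensor bump $f\otimes g$ placed along a common ``diagonal'' lattice. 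Each $F_N$ is compactly supported and smooth, hence a legitimate test function.

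For the right‑hand side, choosing $|a|$ larger than the diameter of $\support f$ makes the summands disjointly supported in $x'$: for each $x'$ at most one term survives, so $\norm{F_N(x',\cdot)}_{L^s_{x''}}=|f(x'-ja)|\,\norm{g}_{L^s}$ for the relevant $j$, and assembling the $L^p_{x'}$ norm gives the exact identity $\norm{F_N}_{p,s}=N^{1/p}\norm{f}_{L^p}\norm{g}_{L^s}$. For the left‑hand side, the translation–modulation rule yields
\[ \widehat{F_N}(\xi',\xi'') = \widehat f(\xi')\,\widehat g(\xi'')\,E_N(a\cdot\xi'+b\cdot\xi''), \qquad E_N(\theta)=\sum_{j=1}^N e^{-2\pi i j\theta}, \]
so that $\widehat{F_N}$ is the fixed profile $\widehat f\otimes\widehat g$ multiplied by a Dirichlet‑type exponential sum in the single real variable $a\cdot\xi'+b\cdot\xi''$.

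The crux is a lower bound $\norm{\widehat{F_N}}_{L^{p'}_{\xi'}L^{s'}_{\xi''}}\gtrsim N^{1/s}$. For each fixed $\xi'$ there is a $\xi''$ with $|\xi''|\le\tfrac1{2|b|}$ at which $a\cdot\xi'+b\cdot\xi''\in\integers$, i.e.\ at which $|E_N|$ attains its peak value $N$; taking $|b|$ large enough that this peak lies in the region where $|\widehat g|\gtrsim|\widehat g(0)|$, and integrating $|E_N|^{s'}$ over the $O(1/N)$‑thick slab on which $|E_N|\gtrsim N$, I expect $\norm{\widehat{F_N}(\xi',\cdot)}_{L^{s'}_{\xi''}}\gtrsim|\widehat f(\xi')|\,N^{1-1/s'}=|\widehat f(\xi')|\,N^{1/s}$ uniformly in $\xi'$ (the endpoint $s=1$ being the statement that the peak value $N$ of $|E_N|$ controls the $L^\infty_{\xi''}$ norm). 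Taking the $L^{p'}_{\xi'}$ norm and using that $\widehat f$ is nonzero almost everywhere then gives $\norm{\widehat{F_N}}_{L^{p'}_{\xi'}L^{s'}_{\xi''}}\gtrsim N^{1/s}$. Combining the two estimates,
\[ \frac{\norm{\widehat{F_N}}_{L^{p'}_{\xi'}L^{s'}_{\xi''}}}{\norm{F_N}_{p,s}}\gtrsim N^{1/s-1/p}\longrightarrow\infty \]
because $s<p$. The main obstacle is exactly this frequency‑side lower bound: one must select, uniformly in $\xi'$, a peak of the exponential sum that is simultaneously close to $\xi''=0$, and one must dispatch both the measure‑zero zero set of $\widehat f$ and the $s'=\infty$ endpoint. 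I would note in passing that the same computation with $p\le s$ produces a ratio $N^{1/s-1/p}\le 1$, consistent with Corollary~\ref{cor2}, and that nothing in the argument requires $p\le 2$.
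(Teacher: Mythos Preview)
Your construction is correct and genuinely different from the paper's. The paper proceeds in two separate steps: for $s=1$ it argues, via a limiting procedure with inputs of the form $f(x')\delta_{x''=\psi(x')}$, that boundedness would force a uniform Hausdorff--Young inequality for the oscillatory integrals $f\mapsto\int e^{-2\pi i\phi(\xi')\cdot\psi(x')}e^{-2\pi ix'\cdot\xi'}f(x')\,dx'$ with \emph{arbitrary} measurable phases $\phi,\psi$, which is absurd; for $1<s<p$ (first in $d'=d''=1$) it takes the sheared, dilated family $F_t(x,y)=t^{1/p}f(tx)\,g(y-x)$, computes $\widehat{F_t}(\xi,\eta)=t^{-1/p'}\widehat g(\eta)\,\widehat f\big(t^{-1}(\xi-\eta)\big)$ explicitly, and checks that $\norm{\widehat{F_t}}_{p',s'}$ has order $t^{1/s'-1/p'}\to\infty$ as $t\to 0$ while $\norm{F_t}_{p,s}$ stays fixed. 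Both arguments exploit the same heuristic---mass placed along a diagonal in $(x',x'')$ produces a Fourier transform concentrated along the anti-diagonal in $(\xi',\xi'')$, which the ordered mixed norm $L^{p'}_{\xi'}L^{s'}_{\xi''}$ penalizes precisely when $s<p$---but the mechanisms differ: the paper dilates a single shear continuously, whereas you superpose lattice translates and read off a Dirichlet-kernel lower bound. Your route has the merits of treating the endpoint $s=1$ uniformly with $s>1$ and of never invoking $p\le 2$; the paper's route is a bit more direct in that the inner $L^{s'}$ norm is evaluated exactly by a change of variables rather than bounded below via peak analysis of $E_N$.
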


\begin{proof}[Sketch of proof]
First consider $s=1$.
Then $s'=\infty$ and if such an inequality were valid then by a limiting argument 
one could choose $F(x',x'') = f(x')\delta_{x''=\psi(x')}$, for an arbitrary measurable
function $\psi:\reals^{d'}\to\reals^{d''}$, to conclude that
\[ \norm{\int_{\reals^{d'}} e^{-2\pi i\phi(\xi')\cdot\psi(x')} e^{-2\pi i x'\cdot\xi'} f(x')\,dx'}_{p'} 
\le C \norm{f}_p,\] 
uniformly for all $f\in L^p(\reals^{d'})$
and all Lebesgue measurable functions $\phi,\psi:\reals^{d'}\to\reals^{d''}$. 
This is absurd, as choosing $\phi(\xi')\equiv \xi'$ and $\psi(x')\equiv -x'$ makes plain.

Next consider the case $1<s<p\le 2$.
To simplify notation consider the case $d'=d''=1$.
Let $f,g:\reals^1\to\complex$ be Schwartz functions that do not vanish identically.
For $t\in\reals^+$ consider $F(x,y) = f_t(x)g(y-x)$
where $f_t(x) = t^{1/p} f(tx)$. 
Then
\begin{align*} \widehat{F}(\xi,\eta)
&= \int_{\reals}
e^{-2\pi i x\xi} f_t(x)
\int_{\reals}
e^{-2\pi i y\eta} g(y-x)
\,dy\,dx
\\&
= \widehat{g}(\eta) \int_{\reals}
e^{-2\pi i x(\xi-\eta)}  t^{1/p} f(tx)
\,dx
\\&
= \widehat{g}(\eta) t^{-1/p'} \widehat{f}(t^{-1}(\xi-\eta)).
\end{align*}
It is straightforward to verify that as $t\to 0$, the $L^{p'}_{\xi}L^{s'}_\eta$ norm of 
this function has order of magnitude $t^{-1/p'} t^{1/s'}$.
If $s<p$, $1/s' < 1/p'$ and consequently 
$t^{-1/p'} t^{1/s'}\to\infty$ as $t\to 0$.

The case of arbitrary dimensions can be reduced to this construction for $\reals^1\times\reals^1$ 
by consideration of functions that are
products of the individual coordinates, with $f_t$ defined by dilation with respect to a single
coordinate and $g(y-x)$ replaced by $g(x''-vx'_i)$ for some nonzero vector $v\in\reals^{d''}$
and some coordinate $x'_i$ of $x'$.
\end{proof}

\medskip
The constant factors on the right-hand sides
of the inequalities in the first four results stated above are all optimal.
To demonstrate this, it suffices to consider
product functions $F(x',x'') = g(x')h(x'')$ and to invoke the optimality
of the constants in Beckner's inequality.

\medskip
The formulations and reasoning extend to a product of any two locally compact Abelian groups,
with norms defined with respect to Haar measure and Fourier transforms normalized
so as to be unitary from $\lt$ to $\lt$ of the dual groups,
and contractive from $L^1$ to $L^\infty$ of the dual groups.
The constants $\wb_p^{d'}$ and $\wb_p^{d'}\wb_s^{d''}$ 
in the above results are replaced by 
the optimal constant in the Hausdorff-Young inequality for the first factor,
and by the product of the optimal constants for the two factor groups, respectively.


\begin{thebibliography}{20}

\bibitem{beckner} 
W.~Beckner, {\em Inequalities in Fourier analysis}, Ann. of Math. (2) 102 (1975), no. 1, 159--182 

\bibitem{greenleaf}
D.~Geba, A.~Greenleaf, A.~Iosevich, E.~Palsson and E.~Sawyer,
{\em Restricted convolution inequalities, multilinear operators and applications},
Math. Res. Lett. 20 (2013), no. 4, 675-694.


\end{thebibliography}
\end{document}